\begin{document}

\begin{frontmatter}
\title
{Approximate solutions of a time-fractional diffusion equation with a source term using the variational iteration method}

\author
{Iftikhar Ali, Bilal Chanane, Nadeem A. Malik$^1$}

\address{Department of Mathematics and Statistics,
King Fahd University of Petroleum and Minerals,
P.O. Box 5046, Dhahran 31261, Saudi Arabia}

\fntext[label2]
{Corresponding author e-mail: namalik@kfupm.edu.sa (or nadeem\_malik@cantab.net)
\hfill}

\begin{abstract}
We consider a time fractional differential equation of order $\alpha$, $0<\alpha<1$, 
$$
\frac{\partial c(x,t)}{\partial t}={}^C_0\mathcal{D}_t^{\alpha}[(Ac)(x,t)]+q(x,t) ,\quad x > 0, t > 0, \quad c(x,0)=f(x).
$$
where ${}^C_0\mathcal{D}_t^{\alpha}$ is the Caputo fractional derivative of order $\alpha$,
$A$ is a linear differential operator, $q(x,t)$ is a source term,
and $f(x)$ is the inital condition. Approximate (truncated)
series solutions are obtained by means of the Variational Iteration Method
(VIM). We find the series solutions for different cases of the source term,
in a form that is readily implementable on the computer where
symbolic computation platform is available. The error in  truncated
solution $c_n$ diminishes exponentially fast for a given $\alpha$
as the number of terms in the series increases. VIM has several advantages
over other methods that produce solutions in the series form. The truncated
VIM solutions often converge rapidly requiring only a few terms for fast and
accurate approximations.
\end{abstract}

%\maketitle \numberwithin{equation}{section}
\newtheorem{theorem}{Theorem}[section]
\newtheorem{lemma}[theorem]{Lemma}
\newtheorem{remark}[theorem]{Remark}
\newtheorem{definition}[theorem]{Definition}
\newtheorem{corollary}[theorem]{Corollary}
\allowdisplaybreaks
\begin{keyword}
Fractional Diffusion Equation, Caputo derivative, VIM, Power series, Numerical,
Convergence analysis
\end{keyword}
\end{frontmatter}

\section{Introduction}\label{intro}

Recently many researchers have formulated mathematical models for
a wide range of different physical phenomena using fractional calculus, from
crowded systems to transport through porous media.
For example, Metzler and Klafter \cite{metzler2000random},
derived the fractional partial differential equations that describes
anomalous diffusion through porous media; Mainardi
\cite{mainardi2012historical} used fractional models to describe waves
propagating through viscoelastic materials; Hilfer \cite{hilfer2000fractional} provided
many applications of fractional calculus in physics. Similarly,
there are applications of fractional calculus in biology Magin
\cite{magin2010fractional}, in medical sciences Magin and Ovadia \cite{magin2008modeling},
in ecological modeling Agrawal et al. \cite{agrawal2012synchronization},
in finance  Scalas et al. \cite{scalas2000fractional}. Ross \cite{ross1975fractional} has mentioned  a number of areas where fractional
 calculus is useful in order to analyze a system; mathematical physics,
 spherical (radial) probability modes generations, hyperstereology, modeling of holograph linearities.
Das  \cite{das2011functional} has discussed the applications of fractional calculus
 in engineering problems, especially, evolutionary design of combinational circuits, electrical  skin phenomena, field programmable gate arrays.

Many different notions of fractional derivatives are given in the literature,
see Kilbas et al. \cite{kilbas2006fracderi},  but the most used definitions are Riemann-Liouville
fractional derivative and the Caputo fractional derivative, defined in the
Section \ref{prelim}. Hilfer \cite{hilfer2000fractional} proposed  the idea of generalized Riemann-Liouville
derivative which is essentially an interpolation between Riemann fractional derivative
and Caputo fractional derivative, and sometimes in the literature it is referred
as the Hilfer fractional derivative . See Hilfer \cite{hilfer2013applications} for a recent
account on Hilfer fractional derivatives.

Furthermore, with the advent of new fractional methods, there is a need to develop
efficient, fast and stable numerical algorithms for the integration
of fractional differential equations. Therefore, in parallel researchers are developing
new semi-analytical and numerical methods to
find the solutions of  proposed mathematical models that are
based on fractional calculus. For instance, He
\cite{he1998approximate} proposed a new analytic method called
Variational Iteration Method (VIM) to find the approximate
solution of the fractional nonlinear differential equations. Odibat and Momani
\cite{odibat2006application} used VIM to obtain the solution of
different time-fractional differential equation and made a
comparison with other methods such as
Adomian decomposition method and homotopy perturbations methods,
 see Momani and Odibat \cite{momani2007comparison}.

In the present work, we study a time-fractional diffusion equation with a source term,
\begin{gather}\label{intro1}
\frac{\partial}{\partial
t}c(x,t)={}^C_{0}\mathcal{D}_t^{\alpha}[(Ac)(x,t)]+q(x,t) ,\quad x
> 0, t > 0,\\
c(x,0)=f(x),
\end{gather}
where $^C_0\mathcal{D}_t^{\alpha}$ denotes the  Caputo %Riemann-Liouville
fractional derivative (defined below in Eq. (5)),
$A$ represents a linear operator in the spatial variable $x$, $q(x,t)$
represents the source or sink term and $f(x)$ represents the initial
condition.  The unknown function $c(x,t)$, is also called a
propagator, Metzler and Klafter \cite{metzler2004restaurant}, Luchko and Punzi \cite{luchko2011modeling},
and it can be interpreted as a diffusing scalar (e.g. temperature, passive particle)
or as the probability density function of locating a particle at the
position $x$ at the time $t$.

The main objectives of the present study are, firstly
to find the  approximate analytic solution of equation
\eqref{intro1} for some specific cases of the linear operator $A$ and
source term $q(x,t)$ using VIM and secondly to express the series solutions
in a form that is easy to implement on computer.
Thirdly, we present a case study of sinusoidal uploading whose exact solution is
 known which can be used to compare the accuracy of truncated series solutions obtained by VIM.

 We have organized this article as follows: in Section (\ref{prelim}),
 we provide some basic definitions and results from
 fractional calculus, in Section (\ref{vim}), we describe
the variational iteration method to obtain  the solution of the
problem \eqref{intro1} subject to initial condition, in Section
(\ref{case}), we provide a case study of  a time-fractional differential equation
with sinusoidal uploading, we find the approximate solutions by variational iteration method and
then compare the results with exact solution. We plot
the graphs of the VIM solutions along with exact solution, moreover,
we provide the error plots. In the last
Section (\ref{conc}), we state our conclusions of the study.

\section{Preliminaries}\label{prelim}
In this section, we state few definitions and results from
fractional calculus. A detailed account on fractional derivatives and integrals
can be found in Kilbas et al. \cite{kilbas2006fracderi}. The generalized
derivatives with their Laplace transforms are discussed in Sandev et al.
\cite{sandev2011fractional}. \\ \\

\noindent\textbf{\emph{Riemann-Liouville Fractional Integral }}
 of order $\alpha$ for an absolutely integrable function $f(t)$ is
defined by
\begin{equation}\label{pre1}
\left(_0I_t^{\alpha}f\right)(t):=\frac{1}{\Gamma(\alpha)}\int_0^t(t-\tau)^{\alpha-1}f(\tau)d\tau,\quad
t > 0, \alpha\in\mathbb{R}^+
\end{equation}
where $\mathbb{R}^+$ is the set of positive real numbers.\\

\noindent\textbf{\emph{Riemann-Liouville Fractional Derivative }} of
order $\alpha>0$ for an absolutely integrable function $f(t)$ is
defined by
\begin{align}\label{pre2}
\nonumber \left(_0D_t^{\alpha}f\right)(t):&=D_t^m \circ
_0I_t^{m-\alpha}f(t),\\
&=\left\{
    \begin{array}{ll}
        \frac{1}{\Gamma(m-\alpha)}\frac{d^m}{dt^m}\int_{0}^{t}\frac{f(\tau)}{(t-\tau)^{\alpha+1-m}}d\tau  & \mbox{if } m-1<\alpha<m  \\
        \frac{d^m}{dt^m}f(t) & \mbox{if } \alpha=m
    \end{array}
\right.
\end{align}\\

\noindent\textbf{\emph{Caputo Fractional Derivative }}
of order $\alpha>0$ for a function $f(t)$, whose $m$th order
derivative is absolutely integrable, is defined by
\begin{align}\label{pre3}
\nonumber \left(_0^CD_t^{\alpha}f\right)(t):&=
_0I_t^{m-\alpha}\circ D_t^mf(t),\\
&=\left\{
    \begin{array}{ll}
        \frac{1}{\Gamma(m-\alpha)}\int_{0}^{t}\frac{f^{(m)}(\tau)}{(t-\tau)^{\alpha+1-m}}d\tau  & \mbox{if } m-1<\alpha<m  \\
        \frac{d^m}{dt^m}f(t) & \mbox{if } \alpha=m
    \end{array}
\right.
\end{align}
In general, Riemann-Liouville and Caputo fractional derivatives are not equal, i.e.,
\[
\left(_0D_t^{\alpha}f\right)(t):=D_t^m \circ
_0I_t^{m-\alpha}f(t)\neq _0I_t^{m-\alpha}\circ
D_t^mf(t)=:\left(_0^CD_t^{\alpha}f\right)(t)
\]
unless $f(t)$ along with its $m-1$ derivatives vanish at $t=0^+$.\\

\noindent\textbf{\emph{Hilfer Fractional Derivative }}
 of order $\alpha$, $0<\alpha<1$ and type $\beta$, $0\leq\beta\leq1$ for an absolutely
integrable function $f(t)$ with respect to $t$ is defined by,
\cite{hilfer2013applications},
\begin{equation}\label{pre4}
\left(D_t^{\alpha,\beta}f\right)(t)=\left(_0I_t^{\beta(1-\alpha)}\frac{d}{dt}I^{(1-\beta)(1-\alpha)}f\right)(t).
\end{equation}
Note that Hilfer fractional derivative interpolates between
Riemann-Liouville fractional derivative and Caputo fractional
derivative, because if $\beta=0$ then Hilfer fractional derivative
corresponds to Riemann-Liouville fractional derrivative and if
$\beta=1$ then Hilfer fractional derivative corresponds to Caputo
fractional derivative.\\

\noindent\textbf{\emph{Riemann-Liouville derivative of a constant A:}} \\ \\
$ {}_0D_t^{\alpha} A = \displaystyle{At^{-\alpha}\over \Gamma(1-\alpha)}$\\ \\

\noindent\textbf{\emph{For the Caputo derivative we have:}}\\ \\
 ${}^C_0D_t^{\alpha} \ A = 0$, where $A$ is a constant. \\

\noindent ${}^C_0D_t^{\alpha} \ t^\beta = \displaystyle{\Gamma(\beta+1) 
\over \Gamma(\beta-\alpha+1)} x^{\beta-\alpha}$ for $n-1<\alpha <n$, $\beta > n-1$
\\ \\

\noindent\textbf{\emph{Mittag-Leffler Function}} is the generalization of
exponential function\\ $\displaystyle{e^z=\sum_{k=0}^{\infty}\frac{z^k}{k!}}$.\\

\noindent\textbf{\emph{1-parameter Mittag-Leffler Function }}
\begin{align}\label{pre5}
E_{\nu}(z)=\sum_{k=0}^{\infty}\frac{z^k}{\Gamma(\nu k+1)},
\quad \nu >0.
\end{align}
\vskip 0.2cm \noindent\textbf{\emph{2-parameter Mittag-Leffler
Function }}
\begin{align}\label{pre6}
E_{\nu,\mu}(z)=\sum_{k=0}^{\infty}\frac{z^k}{\Gamma(\nu
k+\mu)}, \quad \nu>0, \mu>0.
\end{align}

%%%%%%%%%%%%%%%%%%%%%%%%%%%%%%%%%%%%%%%%%%%%%%%%%%%%%%

\section{Variational Iteration Method}\label{vim}

Variational iteration method is an analytic method for finding the
solutions of differential equations. It poses the given differential
equation in an iterative integral form with the initial guess. It generates
a sequence of approximate solutions which eventually converge to the exact
solution provided the solution exists. The $n$th order truncated series
can be used to estimate the solution of the given differential equation.
The method can be used to find the solutions of linear or nonlinear,
conventional or fractional, ordinary or partial differential equations.

 In this section, we describe the variational iteration method,
and provide an outline for its implementation. He \cite{he1998approximate}
proposed VIM to obtain the solutions of fractional differential
equations describing the seepage flow in porous media. Later, He \cite{he1999variational}
extended the method to nonlinear differential equations and obtained
the analytic solutions of some nonlinear differential equations.
The method provides the solution in the form of a
rapidly convergent successive approximations. For problems where a
closed form of the exact solution is not achievable, the $n$th  approximation
can be used to estimate the exact solution.

 Variational iteration method has certain advantages over the other proposed
analytic methods such as Adomian decomposition method (ADM) and  homotopy
perturbation method (HPM). In the case of ADM, a lot of work has to be done in order
to compute the Adomian polynomials for nonlinear terms,  see Wazwaz \cite{wazwaz2009partial},
and in the case of homotopy perturbation method (HPM), the method requires a huge amount of
calculations when the degree of nonlinearity increases, Momani and Odibat \cite{momani2007comparison}.
On the other hand, no specific requirements are needed, for nonlinear operators,
in order to use VIM, for instance, HPM requires an introduction of small parameter, or
the assumption of linearity in other nonlinear methods.

 Variational iteration method has been widely acknowledged and it has been extensively used
in all branches of science and engineering to find the solutions
of differential equations. For instance,  Noor and Mohyud-Din \cite{noor2010variational} applied VIM to solve the
twelfth order boundary value problems using He's polynomials. Shirazian and Effati \cite{shirazian2012solving}
solved a class of nonlinear optimal control problems by using VIM. Sakar et al. \cite{sakar2012variational}
obtained the approximate analytical solutions of the nonlinear Fornberg-Whitham
equation with fractional time derivative.  Chen and Wang \cite{chen2010variational} employed VIM
for solving a neutral functional-differential equation with proportional delays.
Elsaid \cite{elsaid2010variational} used VIM for solving Riesz fractional partial
differential equations. Noor and Mohyud-Din \cite{noor2009variational} used VIM for solving problems
related to unsteady  flow of gas through a porous medium using He's polynomials
and Pade approximants. VIM also proves to be effective for the heat
and the wave equations, see Molliq  et al.\cite{molliq2009variational}.

 Next, we describe the procedure how to use VIM to the problem \eqref{intro1}.
Consider the time-fractional partial differential equation,
\begin{equation}\label{vim1}
\frac{\partial}{\partial t}c(x,t)= {}_0^C\mathcal{D}_t^{\alpha}[(Ac)(x,t)]+q(x,t),
\end{equation}
where $^C_0\mathcal{D}_t^{\alpha}$ represents the Caputo %Riemann-Liouville
fractional derivative with respect to the time variable $t$, and $A$
represents a differential operator with respect to the space variable $x$.
\vskip 0.2cm \par The variational iteration method presents a
correctional functional in $t$ for Eq. \eqref{vim1}
in the form, with $c_n$ assumed known,
\begin{equation}\label{vim2}
c_{n+1}(x,t)=c_{n}(x,t)+\int_0^t\lambda(\xi)\left(\frac{\partial{c_n(x,\xi)}}{\partial{\xi}}-
{}^C_0\mathcal{D}_{\xi}^{\alpha}[(A\widetilde{c_n})(x,\xi)]-q(x,\xi)\right)d\xi,
\end{equation}
where $\lambda(\xi)$ is a general Lagrange multiplier which can be
identified optimally by variational theory and $\tilde{c}_n$ is a
restricted value that means it behaves like a constant, hence
$\delta\widetilde{c}_n=0$, where $\delta$  is the variational
derivative.

VIM is implemented in two basic steps;
\begin{enumerate}
\item The determination of the Lagrange multiplier $\lambda(\xi)$
that will be identified optimally through variational theory.
\item With $\lambda(\xi)$ determined, we substitute the result
into Eq. \eqref{vim2} where the restriction should be
omitted.
\end{enumerate}
\vskip 0.2cm \par Taking the $\delta-$variation of Eq. \eqref{vim2}
with respect to $c_n$, we obtain
\begin{equation}\label{vim3}
\delta c_{n+1}(x,t)=\delta
c_{n}(x,t)+\delta\int_0^t\lambda(\xi)\left(\frac{\partial{c_n(x,\xi)}}{\partial{\xi}}-
{}^C_0\mathcal{D}_{\xi}^{\alpha}[A(\widetilde{c_n}(x,\xi))]-q(x,\xi)\right)d\xi.
\end{equation}
\vskip 0.2cm \par Since $\delta \widetilde{c_n}=0$ and $\delta q=0$, we have
\begin{equation}\label{vim4}
\delta c_{n+1}(x,t)=\delta
c_{n}(x,t)+\delta\int_0^t\lambda(\xi)\left(\frac{\partial{c_n(x,\xi)}}{\partial{\xi}}\right)d\xi.
\end{equation}
To determine the Lagrange multiplier
$\lambda(\xi)$ we integrate by parts the integral in the Eq.
\eqref{vim4}, and noting that variational derivative of a constant
is zero, that is, $\delta\widetilde{c}_n=0$. Hence the Eq. \eqref{vim4}
yields
\begin{align}\label{vim5}
\nonumber \delta c_{n+1}(x,t)&=\delta
c_{n}(x,t)+\delta\left(\lambda(\xi)c_n(x,\xi)|_{\xi=t}-\int_0^t\frac{\partial}{\partial\xi}\lambda(\xi)\delta
c_n(x,\xi)\right)d\xi\\
&=\delta
c_{n}(x,t)(1+\lambda(\xi)|_{\xi=t})-\int_0^t\frac{\partial}{\partial\xi}\lambda(\xi)\delta
c_n(x,\xi)d\xi
\end{align}
\vskip 0.2cm \par The extreme values of $c_{n+1}$ requires that
$\delta c_{n+1}=0$. This means that left hand side of equation
\eqref{vim5} is zero, and as a result the right hand side should
be zero as well, that is,
\begin{equation}\label{vim6}
\delta
c_{n}(x,t)(1+\lambda(\xi)|_{\xi=t})-\int_0^t\frac{\partial}{\partial\xi}\lambda(\xi)\delta
c_n(x,\xi)d\xi=0
\end{equation}
\vskip 0.2cm \par This yields the stationary conditions
\begin{gather}
1+\lambda(\xi)|_{\xi=t}=0\\
\text{and }\lambda'(\xi)=0\\
\text{which implies } \lambda=-1.
\end{gather}
\vskip 0.2cm \par Hence Eq. \eqref{vim2} becomes
\begin{equation}\label{vim7}
c_{n+1}(x,t)=c_{n}(x,t)-\int_0^t\left(\frac{\partial{c_n(x,\xi)}}{\partial{\xi}}-
{}^C_0\mathcal{D}_{\xi}^{\alpha}[A(c_n(x,\xi))]-q(x,\xi)\right)d\xi,
\end{equation}
where the restriction is removed on $c_n$. Equation \eqref{vim7}
can further be simplified into the following form:
\begin{equation}\label{vim8}
c_{n+1}(x,t)=c_{n}(x,0)+\int_0^t\left(
{}^C_0\mathcal{D}_{\xi}^{\alpha}[(Ac_n)(x,\xi)]\right)d\xi + \int_0^tq(x,\xi)d\xi,
\end{equation}
for $n\geq 0$. We can use Eq. \eqref{vim8} to obtain the
successive approximations of the solution of the problem
\eqref{vim1}. The zeroth approximation $c_0(x,t)$ can be chosen
from the initial condition.

  Introducing the notation $J_t(\cdot)=\int_0^t(\cdot)d\xi$, Eq. \eqref{vim8}
can be rewritten as
\begin{equation}\label{vim9}
c_{n+1}(x,t)=c_{n}(x,0) + J_t\left({}^C_0\mathcal{D}_{t}^{\alpha}[(Ac_n)(x,t)]\right)+J_t\left(q(x, t)\right).
\end{equation}

Setting $e_n(x,t)=c_n(x,t) - c_{n-1}(x,t)$, for $n\geq 1$ and $c_0(x,t)=f(x)$,
we have
\begin{equation}\label{vim10}
e_{n+1}(x,t)=A^n \left(J_t\ {}^C_0\mathcal{D}_{t}^{\alpha}\right)^n e_1(x,t),
\end{equation}
for $n\geq 1$.

 Let $q(x,t)$ be analytic in $t$ about $t=0$, we have
 \begin{equation}\label{vim11}
q(x,t) = \sum_{k\geq 0}q_k(x) \frac{t^k}{k!}.
 \end{equation}

 Notice that the Riemann-Liouville derivative of $t^{\lambda}$  is given by,
\begin{equation}\label{vim12}
{}^C_0\mathcal{D}_t^{\alpha}(t^{\lambda})=\frac{\Gamma(1+\lambda)}{\Gamma(1+\lambda-\alpha)}t^{\lambda-\alpha}.
\end{equation}
Integrating above expression from $0$ to $t$, we obtain
\begin{equation}\label{vim13}
J_t \left({}^C_0\mathcal{D}_t^{\alpha} t^{\lambda} \right) = \int_0^t {}^C_0\mathcal{D}_{\xi}^{\alpha}({\xi}^{\lambda})d\xi=\frac{\Gamma(1+\lambda)}{\Gamma((1-\alpha)+\lambda+1)}t^{\lambda+(1-\alpha)}.
\end{equation}

 We claim the following:

\begin{lemma}\label{lma1}
\begin{equation}\label{vim14}
\left( J_t\ {}^C_0\mathcal{D}_t^{\alpha}\right)^n t^{\lambda}=\frac{\Gamma(1+\lambda)}{\Gamma(1+\lambda + n(1-\alpha))}t^{\lambda+n(1-\alpha)}
\end{equation}
for $n\geq 1$.
\end{lemma}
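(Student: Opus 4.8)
The plan is to prove Lemma \ref{lma1} by induction on $n$. The base case $n=1$ is nothing other than Eq. \eqref{vim13}, which has already been established by directly combining the Caputo power rule \eqref{vim12} with a single application of $J_t$; so no work is required there. For the inductive step I would assume the formula holds for some $n\ge 1$ and write
\[
\left( J_t\ {}^C_0\mathcal{D}_t^{\alpha}\right)^{n+1} t^{\lambda}
= \left( J_t\ {}^C_0\mathcal{D}_t^{\alpha}\right)\!\left[\left( J_t\ {}^C_0\mathcal{D}_t^{\alpha}\right)^{n} t^{\lambda}\right],
\]
substitute the induction hypothesis into the inner bracket, pull the constant $\Gamma(1+\lambda)/\Gamma(1+\lambda+n(1-\alpha))$ out of the linear operator $J_t\,{}^C_0\mathcal{D}_t^{\alpha}$, and then apply the already-proven case $n=1$ (i.e. Eq. \eqref{vim13}) to the leftover monomial $t^{\lambda+n(1-\alpha)}$, that is, with $\lambda$ replaced by $\lambda+n(1-\alpha)$.

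Carrying this out yields the constant times $\dfrac{\Gamma(1+\lambda+n(1-\alpha))}{\Gamma(1+\lambda+(n+1)(1-\alpha))}\,t^{\lambda+(n+1)(1-\alpha)}$, and the factor $\Gamma(1+\lambda+n(1-\alpha))$ cancels against the denominator of the pulled-out constant, leaving exactly $\dfrac{\Gamma(1+\lambda)}{\Gamma(1+\lambda+(n+1)(1-\alpha))}\,t^{\lambda+(n+1)(1-\alpha)}$, which is the claim for $n+1$. This telescoping of the Gamma factors is the whole mechanism of the proof, and once the set-up is written down it is essentially automatic.

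The one point that genuinely needs care — more bookkeeping than a real obstacle — is verifying that the power rules being invoked are legitimate at every stage. Since $0<\alpha<1$ we have $m=1$, so the Caputo power rule requires a strictly positive exponent and the subsequent integration $J_t(t^{\mu})$ requires $\mu>-1$; at the $k$-th iteration the exponent just before differentiation is $\lambda+k(1-\alpha)$ and just before integration is $\lambda+k(1-\alpha)-\alpha$. Because $1-\alpha>0$, both conditions hold for all $k\ge 1$ as soon as $\lambda>0$, and in particular for the exponents $t^{k+1}$, $k\ge0$, that actually occur in $e_1(x,t)=J_t q(x,t)$ from the expansion \eqref{vim11}. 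I would therefore state the lemma under the standing hypothesis $\lambda>0$ (or note the admissible range explicitly), after which the induction closes with no further difficulty.
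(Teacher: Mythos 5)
Your proposal is correct and follows essentially the same route as the paper: induction on $n$ with Eq.~\eqref{vim13} as the base case and a single further application of $J_t\,{}^C_0\mathcal{D}_t^{\alpha}$ (with the Gamma factors telescoping) for the inductive step. Your added remark on the admissible range of $\lambda$ is a sensible precaution that the paper omits, but it does not change the argument.
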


\begin{proof}
We prove it by induction on $n$. The relation \eqref{vim14} is true for $n=1$ by Eq. \eqref{vim13}.
Assume it is true for $n-1$, that is,
\begin{equation}\label{vim15}
\left(J_t\ {}^C_0\mathcal{D}_t^{\alpha}\right)^{n-1} t^{\lambda}=\frac{\Gamma(1+\lambda)}{\Gamma(1+\lambda + (n-1)(1-\alpha))}t^{\lambda + (n-1)(1-\alpha)}.
\end{equation}
We shall prove it true for $n$; we have by using Eq. \eqref{vim15}
\begin{align}\label{vim16}
\nonumber \left(J_t\  {}^C_0\mathcal{D}_t^{\alpha}\right)^n t^{\lambda} & =J_t\ {}^C_0\mathcal{D}_t^{\alpha}\left(\frac{\Gamma(1+\lambda)}{\Gamma(1+\lambda + (n-1)(1-\alpha))}t^{\lambda + (n-1)(1-\alpha)}\right)\\
\nonumber & = \frac{\Gamma(1+\lambda)}{\Gamma(1+\lambda + (n-1)(1-\alpha))}J_t\ {}^C_0\mathcal{D}_t^{\alpha}\left(t^{\lambda + (n-1)(1-\alpha)}\right)\\
 & = \frac{\Gamma(1+\lambda)}{\Gamma(1+\lambda + n(1-\alpha))} t^{\lambda + n(1-\alpha)}.
\end{align}
Thus, the relation is true for all $n\geq 1$.  \hskip 5cm $\blacksquare$
\end{proof}

\vskip 1cm

 Returning to $e_1(x,t)$,  we have by using Eq. \eqref{vim9}
\begin{align}\label{vim17}
\nonumber e_1(x,t) &=  c_1(x,t) - c_0(x,t) \\
\nonumber & = (Af)(x) J_t\  {}^C_0\mathcal{D}_t^{\alpha}(1) +  J_t q(x,t)\\
          & = (Af)(x) \frac{t^{1-\alpha}}{\Gamma(2-\alpha)} + \sum_{k\geq 0} q_k(x) \frac{t^{k+1}}{(k+1)!}.
\end{align}
Thus, we have
\begin{align}\label{vim18}
\nonumber e_{n+1}(x,t) & =A^n \left(J_t\ {}^C_0\mathcal{D}_{t}^{\alpha}\right)^n e_1(x,t)\\
\nonumber e_{n+1}(x,t) & =A^n \left(J_t\ {}^C_0\mathcal{D}_{t}^{\alpha}\right)^n \left((Af)(x) \frac{t^{1-\alpha}}{\Gamma(2-\alpha)} + \sum_{k\geq 0} q_k(x) \frac{t^{k+1}}{(k+1)!}\right)\\
\nonumber & = \left(A^{n+1}f\right)(x) \frac{1}{\Gamma(2-\alpha)} \left(J_t\ {}^C_0\mathcal{D}_{t}^{\alpha}\right)^n   t^{1-\alpha}\\
\nonumber & + \sum_{k\geq 0} \left(A^n q_k\right)(x) \frac{1}{(k+1)!} \left(J_t\  {}^C_0\mathcal{D}_{t}^{\alpha}\right)^n t^{k+1} \\
\nonumber & = \left(A^{n+1}f\right)(x) \frac{1}{\Gamma(1+(n+1)(1-\alpha))}  t^{(n+1)(1-\alpha)}\\
          & + \sum_{k\geq 0} \left(A^n q_k\right)(x) \frac{1}{\Gamma(2+k+n(1-\alpha))} t^{(k+1)+n(1-\alpha)}.
\end{align}
Hence we have the following lemma.
\begin{lemma}\label{lma2}
\begin{align}\label{vim19}
\nonumber e_{n+1}(x,t) & = \left(A^{n+1}f\right)(x) \frac{t^{(n+1)(1-\alpha)}}{\Gamma(1 + (n+1)(1-\alpha)} \\
          & + \sum_{k\geq 0} \left(A^n q_k\right)(x)  \frac{t^{(k+1)+n(1-\alpha)}}{\Gamma(2+k+n(1-\alpha))},
\end{align}
for all $n \geq 0$.
\end{lemma}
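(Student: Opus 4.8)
The plan is to derive \eqref{vim19} by a direct computation starting from the recursion \eqref{vim10} together with the closed form \eqref{vim17} for $e_1$, with Lemma \ref{lma1} doing the real work. First I would dispose of the base case $n=0$: here \eqref{vim19} reads $e_1(x,t)=(Af)(x)\,t^{1-\alpha}/\Gamma(1+(1-\alpha))+\sum_{k\geq 0}q_k(x)\,t^{k+1}/\Gamma(k+2)$, and since $A^0$ is the identity, $\Gamma(1+(1-\alpha))=\Gamma(2-\alpha)$, and $\Gamma(k+2)=(k+1)!$, this is exactly \eqref{vim17}; so the statement holds for $n=0$.

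For $n\geq 1$, I would substitute \eqref{vim17} into \eqref{vim10} and push the linear operators through. Since $A$ acts only on $x$ while $\bigl(J_t\,{}^C_0\mathcal{D}_t^{\alpha}\bigr)^n$ acts only on $t$, and both are linear, one obtains
\begin{align*}
e_{n+1}(x,t) &= (A^{n+1}f)(x)\,\frac{1}{\Gamma(2-\alpha)}\,\bigl(J_t\,{}^C_0\mathcal{D}_t^{\alpha}\bigr)^n t^{1-\alpha} \\
&\quad + \sum_{k\geq 0}(A^{n}q_k)(x)\,\frac{1}{(k+1)!}\,\bigl(J_t\,{}^C_0\mathcal{D}_t^{\alpha}\bigr)^n t^{k+1}.
\end{align*}
Now apply Lemma \ref{lma1} twice: with $\lambda=1-\alpha$ it gives $\bigl(J_t\,{}^C_0\mathcal{D}_t^{\alpha}\bigr)^n t^{1-\alpha}=\tfrac{\Gamma(2-\alpha)}{\Gamma(1+(n+1)(1-\alpha))}\,t^{(n+1)(1-\alpha)}$, and with $\lambda=k+1$ it gives $\bigl(J_t\,{}^C_0\mathcal{D}_t^{\alpha}\bigr)^n t^{k+1}=\tfrac{\Gamma(k+2)}{\Gamma(k+2+n(1-\alpha))}\,t^{(k+1)+n(1-\alpha)}$. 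Substituting, the $\Gamma(2-\alpha)$ cancels the prefactor $1/\Gamma(2-\alpha)$ and $\Gamma(k+2)=(k+1)!$ cancels the prefactor $1/(k+1)!$, leaving precisely \eqref{vim19}. I would also remark that every exponent occurring en route, namely $1-\alpha+j(1-\alpha)$ and $(k+1)+j(1-\alpha)$ for $0\le j\le n$, is strictly positive since $0<\alpha<1$, so the hypotheses behind \eqref{vim12} and Lemma \ref{lma1} are met at each stage.

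The one point requiring care — and the main obstacle — is the interchange of the operators $A^n$ and $\bigl(J_t\,{}^C_0\mathcal{D}_t^{\alpha}\bigr)^n$ with the infinite sum $\sum_{k\geq 0}$ coming from the expansion \eqref{vim11} of $q$. I would justify this using the standing assumption that $q(x,\cdot)$ is analytic at $t=0$, so \eqref{vim11} converges on a neighbourhood of $t=0$ and, for each fixed $x$, the series obtained after applying these termwise-continuous operators still converges locally uniformly in $t$; indeed $\Gamma(k+2)/\Gamma(k+2+n(1-\alpha))=O(k^{-n(1-\alpha)})$ as $k\to\infty$, which only improves convergence. (Alternatively, the whole argument can be cast as an induction on $n$, iterating the single-step identity used in \eqref{vim16}, so that the interchange issue arises once rather than $n$ times.) With that justification in place, the computation above is complete.
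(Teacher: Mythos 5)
Your proposal is correct and follows essentially the same route as the paper: substitute the expression \eqref{vim17} for $e_1$ into the recursion \eqref{vim10}, commute $A^n$ and $\bigl(J_t\,{}^C_0\mathcal{D}_t^{\alpha}\bigr)^n$ through the sum, and apply Lemma \ref{lma1} with $\lambda=1-\alpha$ and $\lambda=k+1$, which is exactly the computation carried out in \eqref{vim18}. Your added remarks on the $n=0$ base case and on justifying the interchange with the infinite series are sensible refinements but do not change the argument.
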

Thus, we can write by using Eq. \eqref{vim9}
\begin{equation}\label{vim20}
c_{n+1}(x,t) = f(x) + \sum_{j=0}^n e_{j+1}(x,t).
\end{equation}
\textbf{Remarks:}
\begin{enumerate}
\item If $q(x,t)$ depends only on the space variable $x$, then $q_k(x)=0$ for all $k \geq 1$ and $q(x,t) = q_0(x)$.
\item If $q(x,t)$ is of the form $q(x,t) = g(x)h(t)$, then take $q_k(x) = h^{(k)}(0) g(x)$, for all $k\geq 0$,
       where $h(t)$ is analytic at $t=0$.
\end{enumerate}

\section{A Case Study}\label{case}

\begin{figure}[t]
 \includegraphics[width=16cm]{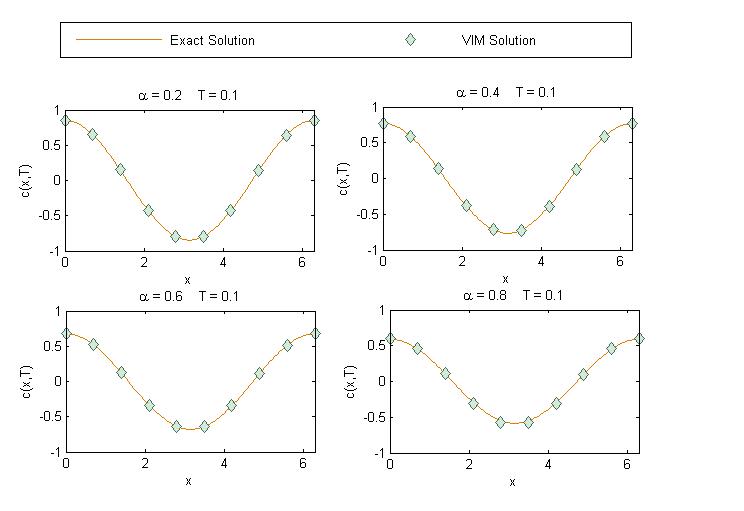}
  \caption{Plots of the exact solution $c(x,t)$, Eq. \eqref{case6}, and the truncated
    VIM solution $c_7(x,t)$, Eq. \eqref{case5},  at $t=0.1$ and $0 \leq x \leq 2\pi$.
    The plots corresponds $\alpha=$ $0.2, 0.4, 0.6$ and  $0.8$, as indicated.}
  \label{fig1}
  \end{figure}

\begin{figure}[t]
 \includegraphics[width=16cm]{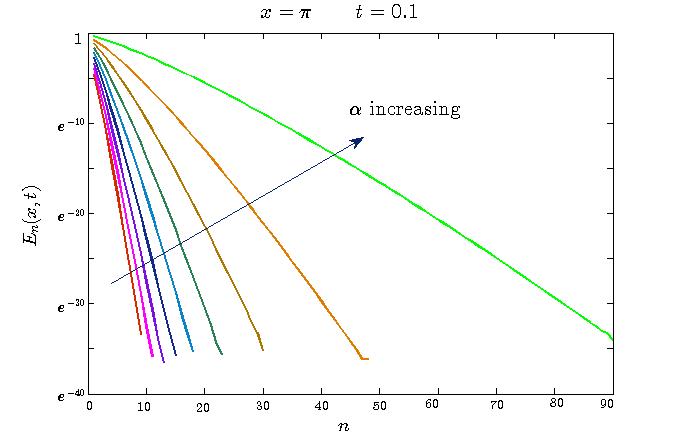}
  \caption{Plot of the relative error $E_n(x, t)$ at $x = \pi$, at $t=0.1$ against the number
of terms n. Vertical axis is scaled as Natural Logarithm, that is, $\ln e^m$, where $m \in \{-40, -35, \cdots , 0\}$.
}\label{fig2}
\end{figure}

\begin{figure}[b]
  \begin{minipage}[b]{0.5\linewidth}
  \includegraphics[width=16cm]{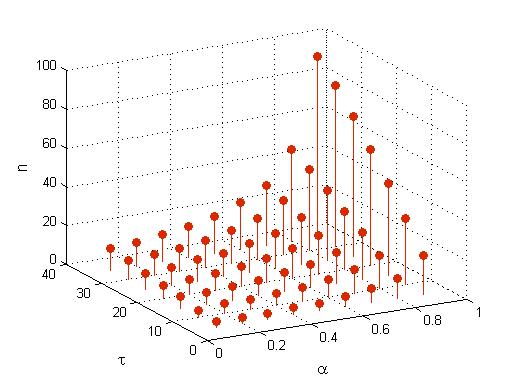}
  \end{minipage}
  \caption{ Stem plot for the Table \ref{Ta:1}. Plot depicts an increase in the values of $n$ as
  $\alpha$ increases and tolerance level becomes smaller.}\label{fig3}
\end{figure}

\begin{figure}[b]
  \begin{minipage}[b]{0.5\linewidth}
  \hspace{-1cm}
  \includegraphics[width=16cm]{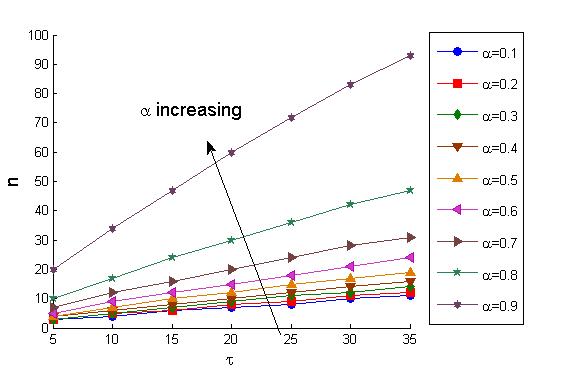}
  \end{minipage}
  \caption{Plots of $n$ against $\tau$, for specific values of $\alpha$.}\label{fig4}
\end{figure}

\begin{table}[t]
    \centering
    \begin{tabular}{*{10}{c}}\toprule
      \multirow{2}{*}[-0.5ex]{$\tau$} & \multicolumn{9}{c}{$\alpha$} \\ \cmidrule{2-10}
       & $0.1$ & $0.2$ & $0.3$ & $0.4$ & $0.5$ & $0.6$ &
$0.7$ & $0.8$ & $0.9$\\ \midrule
$5$ & $3$ & $3$ & $3$ &$4$ & $4$ & $5$ & $7$ & $10$ & $20$ \\
$10$ & $4$ & $5$ & $5$ & $6$ & $7$ & $9$ & $12$ & $17$ & $34$ \\
$15$ & $6$ & $6$ & $7$ & $8$ & $10$ & $12$ & $16$ & $24$ & $47$ \\
$20$ & $7$ & $8$ & $9$ & $10$ & $12$ & $15$ & $20$ & $30$ & $60$ \\
$25$ & $8$ & $9$ & $11$ & $12$ & $15$ & $18$ & $24$ & $36$ & $72$ \\
$30$ & $10$ & $11$ & $12$ & $14$ & $17$ & $21$ & $28$ & $42$ & $83$ \\
$35$ & $11$ & $12$ & $14$ & $16$ & $19$ & $24$ & $31$ & $47$ & $93$ \\
\bottomrule
    \end{tabular}
    \vskip 0.4cm
    \caption{For different values of $\alpha$ and different tolerance levels $\ln e^{-\tau}$, the $n$th
    approximate solution has to be used to achieve the required accuracy.}\label{Ta:1}
\end{table}

\subsection{The sinusoidal uploading}

Consider the following fractional differential equation

\begin{equation}\label{case1}
\frac{\partial{c(x,t)}}{\partial{t}}=
{}^C_0\mathcal{D}_t^{\alpha}\left[\frac{\partial^2c(x,t)}{\partial
x^2}\right]+t \sin x,\quad x
> 0, t > 0,
\end{equation}
with the initial condition $c(x,0)=\cos x$. On comparing with Eq. \eqref{intro1},
we find that the linear operator is $A=\frac{\partial^2}{\partial x^2}$, the initial data is $f(x)=\cos x$,
and the source term is $q(x,t)=t \sin x$, which on comparing with Eq. \eqref{vim11} yields
$q_0(x)=0$, $q_1(x)=\sin x$ and $q_k(x)=0$ for $k\geq 2$.

 We have following,
\begin{equation}\label{case2}
(A^{n+1}f)(x)=(-1)^{n+1}\cos x,
\end{equation}
and
\begin{equation}\label{case3}
(A^{n}q_1)(x)=(-1)^{n}\sin x.
\end{equation}
Substituting Eqs. \eqref{case2}-\eqref{case3} in Eq. \eqref{vim19}, we obtain
\begin{align}\label{case4}
\nonumber e_{n+1}(x,t) & = (-1)^{n+1}\cos x \frac{t^{(n+1)(1-\alpha)}}{\Gamma[1 + (n+1)(1-\alpha)]} \\ & + (-1)^{n}\sin x \frac{t^{2+n(1-\alpha)}}{\Gamma [3+n(1-\alpha)]}.
\end{align}
Substituting Eq. \eqref{case4} in Eq. \eqref{vim20}, we obtain
\begin{align}\label{case5}
\nonumber c_{n+1}(x,t) & = \cos x \left[1 + \sum_{j=0}^n (-1)^{j+1} \frac{t^{(j+1)(1-\alpha)}}{\Gamma[1 + (j+1)(1-\alpha)]}\right] \\
          & + t^2 \sin x \left[\sum_{j=0}^n (-1)^{j} \frac{t^{j(1-\alpha)}}{\Gamma[j(1-\alpha)+3]}\right].
\end{align}
Taking the limit $n\rightarrow\infty$, we obtain the exact solution,
\begin{equation}\label{case6}
c(x,t)=E_{1-\alpha}[-t^{1-\alpha}] \cos x + t^2 E_{1-\alpha, 3}[-t^{1-\alpha}] \sin x.
\end{equation}
\textbf{Remark:} Taking $\alpha =0$,  Eq. \eqref{case1} reduces to conventional partial
differential equation,
\begin{equation}\label{case7}
\frac{\partial{c(x,t)}}{\partial{t}}=
\frac{\partial^2c(x,t)}{\partial
x^2}+t \sin x,\quad x
> 0, t > 0.
\end{equation}
The solution of Eq. \eqref{case7} can be obtained by putting $\alpha =0$ in Eq. \eqref{case6},
that gives,
\begin{equation}\label{case8}
c(x,t)=E_{1}[-t^{1}] \cos x + t^2 E_{1, 3}[-t^{1}] \sin x,
\end{equation}
which agrees with the exact solution
\begin{equation}\label{case9}
c(x,t) = \exp(-t)\cos x + [\exp(-t)+t-1] \sin x.
\end{equation}

 Figure \ref{fig1} shows the plots of the exact solution \eqref{case6} and the VIM
approximate solution \eqref{case5} $c_7(x,t)$, i.e the truncated sum with $n=6$.
The graph shows close agreement between the exact and the VIM solutions.
Later, we will present error analysis, that is, error arises
when using truncated series as an approximate solution to the exact solution.

\subsection{Error Analysis}

Our next goal is to investigate the convergence of the approximate solutions obtained by the
VIM, Eq. \eqref{case5}. For this purpose, we define the relative  error as follows,
 \begin{equation}\label{exp15}
     E_n(x,t) = \frac{|c(x,t) - c_n(x,t)|}{|c(x,t)|}.
 \end{equation}

Figure \ref{fig2} shows the plots of relative error   at the point $(x,t)=(\pi, 0.1)$
against the numer of term $n$ in the truncated VIM solution, for different cases of
$\alpha =0.2, 0.4, 0.6, 0.8$. The vertical axis is scaled as Natural Logarithm.
The relative errors decay exponentially fast with $n$, but with convergence rates
 (slope of the plots in Figure 3) that decrease as $\alpha$ approaches 1.
Thus, a higher order approximate solution is required to achieve a given level of
accuracy as $\alpha$ increases.

Table \ref{Ta:1} summaries the results. It show the number of terms $n$ needed
for a given for a given accuracy, definded as $e^{-\tau}$, and for different
$\alpha$, and . We have depicted the information from Table \ref{Ta:1} as a stem
plot in Fig. \ref{fig3}, which shows
the trends in the number of terms $n$ against $\alpha$ and $\tau$.
$n$ increases for all $\alpha$ and $\tau$, but especially sharply as $\alpha$
approaches 1 and the tolerance becomes very small.
Moreover, $n$  increases almost linearly with
respect to tolerance level $\tau$ for a fixed value of $\alpha$, as shown in Fig. \ref{fig4}.

\section{Conclusions:}\label{conc}

 We have presented solutions of the time fractional diffusion equation with source term
\begin{equation*}
\frac{\partial}{\partial
t}c(x,t)={}^C_0\mathcal{D}_t^{\alpha}[(Ac)(x,t)]+q(x,t).
\end{equation*}
The solutions are found by using variational iteration method and are presented in the form
that avoids the repetition of calculations. The general form of the solutions, obtained by VIM,
is expressed in such a way so that it can be implemented on the computer with no difficulty.
Validation of the numerical procedure is done for a problem  whose exact solution is known.
Results obtained by VIM are in agreement with the exact solution.  It is shown that only
few successive approximations lead to a very good estimate of the exact solution. The
truncation errors decay exponentially fast as $n$ increases. VIM proves to be very efficient
and fast in finding the solutions of fractional differential equations.

%\begin{acknowledgements}
\subsection*{\large Acknowledgements}
The authors would like to acknowledge the support provided by King Abdulaziz City for Science and Technology (KACST) through the Science Technology Unit at King Fahd University of Petroleum and Minerals (KFUPM) for funding this work through project No.  11-OIL1663-04. as part of the National Science, Technology and Innovation Plan (NSTIP).

%National Science, Technology and Innovation Plan
%\end{acknowledgements}

%% BibTeX users please use one of

%\bibliographystyle{elsarticle-harv}
%\bibliographystyle{elsarticle-num-names}
%\bibliographystyle{elsarticle-num}
%\bibliographystyle{model1a-num-names}

%\bibliographystyle{model2-names}

%\bibliography{AMM_Nadeem}   % name your BibTeX data base

\section*{\Large References}

\end{document}